\newtheorem{thm}{Theorem}
\newtheorem{lem}{Lemma}
\newtheorem{conj}{Conjecture}
\begin{document}

\title{Comment and correction for \textit{``On Explicit Construction of Simplex $t$-designs''} by M. S.  Baladram}


\author{Jakub Czartowski}
\email{jakub.czartowski@ntu.edu.sg}
\affiliation{School of Physical and Mathematical Sciences, Nanyang Technological University,
21 Nanyang Link, 637371 Singapore, Republic of Singapore}
\orcid{0000-0003-4062-833X}

\date{May 9, 2025}

\maketitle

\begin{abstract} 
In \cite{baladram2018explicit} a new method of constructing simplex designs based on cyclic group on $n$ elements has been proposed. One of the claims put forward therein is existence of 3-point simplex 3-design in dimension $d = 3$. In this manuscript we present explicit counterarguments and suggest a manner to rectify the existing proofs. By doing this, we show that the results presented in \cite{baladram2018explicit} can be utilised to construct simplex 3-designs scaling as $d^2$, which suggest a general scaling of $d^{t-1}$. Finally, we put forward a notion that encompasses the objects conforming with bounds given in \cite{baladram2018explicit}, which we refer to as symmetry-restricted simplex $t$-designs.
\end{abstract}

This manuscript provides a comment and correction of M. S. Baladram's manuscript ``On Explicit Construction of Simplex $t$-designs''; as such, it should be read in conjunction with \cite{baladram2018explicit}.
	
	\section{Introduction}
	\label{Introduction}

Simplex designs, cubatures, or averaging sets are examples of a broader class of structures known as designs, as introduced in the works of Delsarte, Seymour and Zaslavsky, and Levenshtein~\cite{Delsarte1977, SEYMOUR1984213, levenshtein1998a}. In the simplest terms, a $t$-design refers to an auxiliary measure $\sigma'$ used to approximate a target measure $\sigma$ up to degree-$t$ moments. Since summing over a finite set of points is generally much simpler than integrating over a continuous distribution, the study of designs typically focuses on discrete measures with as few points as possible. This is exemplified in the case of quadratures, where $2t$ (possibly complex) points suffice to approximate a $\mathbb{Z}_2$-symmetric measure $\sigma$ up to the $(2t+1)$-st moment. In~\cite{levenshtein1998a}, Levenshtein provided a general method for deriving lower bounds on the size of $t$-designs for arbitrary metric spaces, using orthogonal polynomials defined with respect to the underlying metric.

However, points within a $d$-point simplex $\Delta^{d-1}$ are not equivalent, making even the derivation of Levenshtein-type lower bounds based on average $\epsilon$-ball volume a challenging task. As a result, researchers often construct sets $X \subset \Delta^{d-1}$ with as few points as possible to establish upper bounds on the minimal size of a $t$-design. Moreover, due to the exponential growth in the number of moments that must be matched, it is useful to formulate necessary conditions that can efficiently verify whether a given set $X$ constitutes a $t$-design. One of the most prominent criteria of this type, applicable to complex projective designs, is the Welch bound~\cite{Welch1974, Datta2012WelchGeo}, which uses the $\abs{X}^2$ entries of a Gram matrix to evaluate design properties at arbitrary degree $t$.

Both the problem of minimal constructions and the formulation of Welch-like necessary conditions for designs in the $d$-point simplex $\Delta^{d-1}$ have been addressed in~\cite{baladram2018explicit}. That work attempted to exploit the symmetry of a set $X$ under cyclic permutations of coordinates to reduce the number of moment conditions, requiring only $t$ equations to be checked. Based on this approach, the existence of a 3-point 3-design in the 3-point simplex $\Delta^2$ was claimed.

In this short communication, we demonstrate that the claims presented in~\cite{baladram2018explicit} are invalid. However, they can be corrected by replacing the cyclic subgroup $\mathcal{C}_d$ with the full symmetric group $\mathcal{S}_d$. While this change increases the typical size of the generated set $X$ from linear in $d$ to factorial in $d$ (i.e., $d!$), it retains the favorable property that only $t$ moment constraints must be verified. We begin by presenting progressively general counterexamples to the claims in~\cite{baladram2018explicit}, pinpointing the exact points of failure. Next, we provide a corrected version of Lemma 3.1, which we then propagate to the other results from that work. In particular, we show that the three points from Corollary 4.2, together with their mirror images, form a 6-point 3-design in $\Delta^2$ consistent with a revised version of Theorem 3.2.

We then introduce a method for generating upper bounds on the size of the smallest simplex $t$-designs on the order of $d^{t-1}$, which we conjecture to reflect the true asymptotic scaling of minimal designs. Finally, we observe that the original Theorem 3.2 may still be repurposed to define a broader class of \emph{symmetry-restricted simplex designs}, in which a set of points accurately approximates all moments forming complete orbits under a fixed permutation subgroup $G \subset \mathcal{S}_d$.

	\section{Preliminaries}
	
	In what follows we will consider $t$-designs on the probability simplex or in the space of pseudoprobabilities. To this end, we define
	\begin{equation}
		\widetilde{\Delta}^{d-1} = \qty{\vb{p} \in\mathbb{R}^d:\sum_{i=1}^d p_i = 1}, \quad
		\Delta^{d-1} = \widetilde{\Delta}^{d-1} \cap \mathbb{R}^d_{\geq 0}
	\end{equation}
	which are the natural definitions for $d$-point pseudoprobability and probability space, respectively. There is a natural flat Lebesgue measure $\sigma$ on $d$-point probability simplex $\Delta^{d-1}$ such that $\sigma(x) = \text{const.}$ and $\sigma(\Delta^{d-1}) = 1$. This translates naturally to the space of pseudoprobabilities by setting $\widetilde{\sigma}(x) = \sigma(x)$ for $x\in\Delta^{d-1}$ and $\widetilde{\sigma}(x) = 0$ otherwise.
	
	In what follows we will restrict the $t$-designs we consider to the ones defined by unweighed sets of points. In this sense, a set of points $X = \qty{\vb{p}}\subset\widetilde{\Delta}^{d-1}$ is called a $t$-design the averages over the set and over the measure $\tilde{\sigma}$ agree for all polynomials $f_t$ of degree at most $t$,
	\begin{equation}
		\frac{1}{\abs{X}} \sum_{\vb{p}} f_t(\vb{p}) \equiv \ev{f_t}_X = \ev{f_t}_{\widetilde{\Delta}^{d-1}} \equiv  \int_{\widetilde{\Delta}^{d-1}} f_t(\vb{p}) \dd{\widetilde{\sigma}}(\vb{p}).
	\end{equation}
	Whenever $X\subset\Delta^{d-1}$, we will refer to such a set as a simplex $t$-design. On the other hand, if $X\not\subset\Delta^{d-1}$ and $X\subset\widetilde{\Delta}^{d-1}$, we will refer to it as a simplex $t$-\textit{pseudo}design to highlight that the points approximating measure $\tilde{\sigma}$ have $\tilde{\sigma}(x) = 0$. Note that every simplex $t$-design is at the same time a simplex $t$-pseudodesign
	
	In order to satisfy this requirement it is enough to consider all monomials of the form
	\begin{equation}
		M\qty((k_i)_{i=1}^d) \equiv M(\vb{k}) = \prod_{i=1}^d x_i^{k_i} 
	\end{equation}
	with $k_i\in \mathbb{N},\,\sum_i k_i \leq t$; for convenience, we define $\vb{k} \equiv (k_i)_{i=1}^d$. Evaluation of averages of such monomials is directly related to generalised Beta function $B$ by the following formula
	\begin{equation}
		\ev{M\qty(\vb{k})}_{\Delta^{d-1}} = (d-1)! \text{B}\qty((k_i+1)_{i=1}^d)\qq{where}
		B\qty(\qty(\alpha_i)_{i=1}^n)\equiv \frac{\prod_{i=1}^n\Gamma(\alpha_i)}{\Gamma\qty(\sum_{i=1}^n \alpha_i)}.
	\end{equation}
	
	\section{Comment on Lemma 3.1}
	
	In the following sections we will argue that Lemma 3.1 presented in~\cite{baladram2018explicit} is not correct, which we will illustrate using the arrangement of 3 points in $\Delta^2$ presented in Corollary 4.2; based on this, we will properly elicit the reason behind  failure of the solution in this particular case. 
	
	\subsection{Evaluation of Corollary 4.2}
	
	We begin with explicit evaluation of the 3-point set $X\subset\Delta^2$ given in Corollary 4.2 of~\cite{baladram2018explicit}. To this end, we recall that it is given by a probability vector $(p_1,p_2,p_3)$ and its cyclic permutations, with the entries defined as roots of a polynomial
	\begin{equation}\label{eq:3-point_non_des}
		60x^3 - 60x^2+15x-1 = 60\qty(x^3 - x^2+\frac{1}{4}x + \frac{1}{60}) = 60(x-p_1)(x-p_2)(x-p_3). 
	\end{equation}
	Inspection of the quadratic coefficient shows that the roots of a polynomial defined in this way indeed form a probability vector as $p_1+p_2+p_3 = 1$. Linear coefficient allows to conclude that 
	\begin{equation}
		\frac{1}{3}\sum_{\pi\in\mathcal{C}_3} p_{\pi(1)} p_{\pi(2)} = \frac{1}{12},
	\end{equation}
	which is enough to show that $X$ is a simplex 2-design. Finally, the constant serves to show that
	\begin{equation}
		\frac{1}{3}\sum_{\pi\in\mathcal{C}_3} p_{\pi(1)} p_{\pi(2)}p_{\pi(3)} = p_1p_2p_3 = \frac{1}{60},
	\end{equation}
	which is true for the average of $p_1p_2p_3$ over a flat Lebesgue measure on $\Delta^2$.
	
	The proper point of failure arises from the fact that $p_1\neq p_2 \neq p_3\neq p_1$. This forces the following sums to disagree,
	\begin{equation}\label{eq:av_inequal}
		\frac{1}{3}\sum_{\pi\in\mathcal{C}_3} p_{\pi(1)} p_{\pi(2)}^2 \neq \frac{1}{3}\sum_{\pi\in\mathcal{C}_3} p_{\pi(1)}^2 p_{\pi(2)} .
	\end{equation}
	Explicit evaluation yields
	\begin{equation}
		\abs{\ev{p_1 p_2^2}_X - \ev{p_1 p_2^2}_{\Delta^2}} = 
		\abs{\ev{p_2 p_1^2}_X - \ev{p_2 p_1^2}_{\Delta^2}} \approx 0.00481
	\end{equation}
	while their average $\frac{1}{2}\qty(\ev{p_1 p_2^2}_X + \ev{p_2 p_1^2}_X)$  agrees with that over the entire simplex, $\ev{p_1 p_2^2}_{\Delta^2}$. 
	
	Let us make this point more abstractly in terms of degree decompositions of $t$ polynomials over simplices $\Delta^{d-1}$.
	
	\subsection{Decomposition of polynomials over probability simplex}
	
	To this end, we recall definition put forward in~\cite{baladram2018explicit} of symmetrisation of monomials with respect to the cyclic group $\mathcal{C}_d$
	\begin{equation}
		F_{\mathcal{C}_d}\qty(\vb{k})
		= \sum_{\pi\in \mathcal{C}_d} M\qty(\pi(\vb{k}))
		= \sum_{\pi\in \mathcal{C}_d} \prod_{i=1}^d x_{\pi(i)}^{k_i} 
	\end{equation}
	with $\pi(\vb{k})$ defined by $\pi(\vb{k})_i = k_{\pi(i)}$; note the change of notation with respect to~\cite{baladram2018explicit}, purpose of which will become clear in later part of the manuscript. Note that we are working with probabilities, thus the vectors are sum-normalised, $\sum_i x_i = 1$. As a consequence, the following property holds
	\begin{equation}
		F_{\mathcal{C}_d}\qty(\vb{k}) = \sum_{j=1}^d F_{\mathcal{C}_d}\qty((k_i + \delta^{ij})_{i=1}^d).
	\end{equation}
	
	To this end, let us consider explicitly the case of $d=3$ with $\sum_i k_i\leq 3$. we have the following system of polynomials, each expressed in terms of $F_{\mathcal{C}_3}(vb{k})$ such that $\sum_i k_i = 3$,
	\begin{subequations}
		\begin{align}
			F_{\mathcal{C}_3}(1,0,0) & = F_{\mathcal{C}_3}(2,0,0) + 2F_{\mathcal{C}_3}(1,1,0) \nonumber \\
            &= F_{\mathcal{C}_3}(3,0,0) + 3\qty[F_{\mathcal{C}_3}(2,1,0) + F_{\mathcal{C}_3}(1,2,0)] + 2 F_{\mathcal{C}_3}(1,1,1) \\
			F_{\mathcal{C}_3}(1,1,0) & = \qty[F_{\mathcal{C}_3}(2,1,0) + F_{\mathcal{C}_3}(1,2,0)] + F_{\mathcal{C}_3}(1,1,1) \\
			F_{\mathcal{C}_3}(2,0,0)& = F_{\mathcal{C}_3}(3,0,0) + \qty[F_{\mathcal{C}_3}(2,1,0) + F_{\mathcal{C}_3}(1,2,0)]
		\end{align}
	\end{subequations}
        and the remaining polynomials already satisfying $\sum_i k_i = 3$. Since the polynomials $F_{\mathcal{C}_3}(2,1,0)$ and $F_{\mathcal{C}_3}(1,2,0)$ never appear separately, we conclude that
	\begin{equation}
		F_{\mathcal{C}_3}(2,1,0) \notin \operatorname{span}(F_{\mathcal{C}_3}(1,0,0),F_{\mathcal{C}_3}(2,0,0),F_{\mathcal{C}_3}(3,0,0)),
	\end{equation}
        thus providing a counterexample for Lemma 3.1; similar cases can be provided straighforwardly for arbitrary $t \geq 3$. More dauntingly, for $d=4, t = 2$ one can procur the following
	\begin{equation}
		F_{\mathcal{C}_4}(1,0,1,0) \notin \operatorname{span}(F_{\mathcal{C}_4}(1,0,0,0),F_{\mathcal{C}_4}(2,0,0,0)),
	\end{equation}
	thus providing counterexamples for $t\geq2,\,d\geq 4$.

    In order to supply additional intuition we provide visualization of the appropriately symmetrised monomials in Fig.~\ref{fig:polynomials}.
    \begin{figure}[t]
        \centering
        \includegraphics[width=\linewidth]{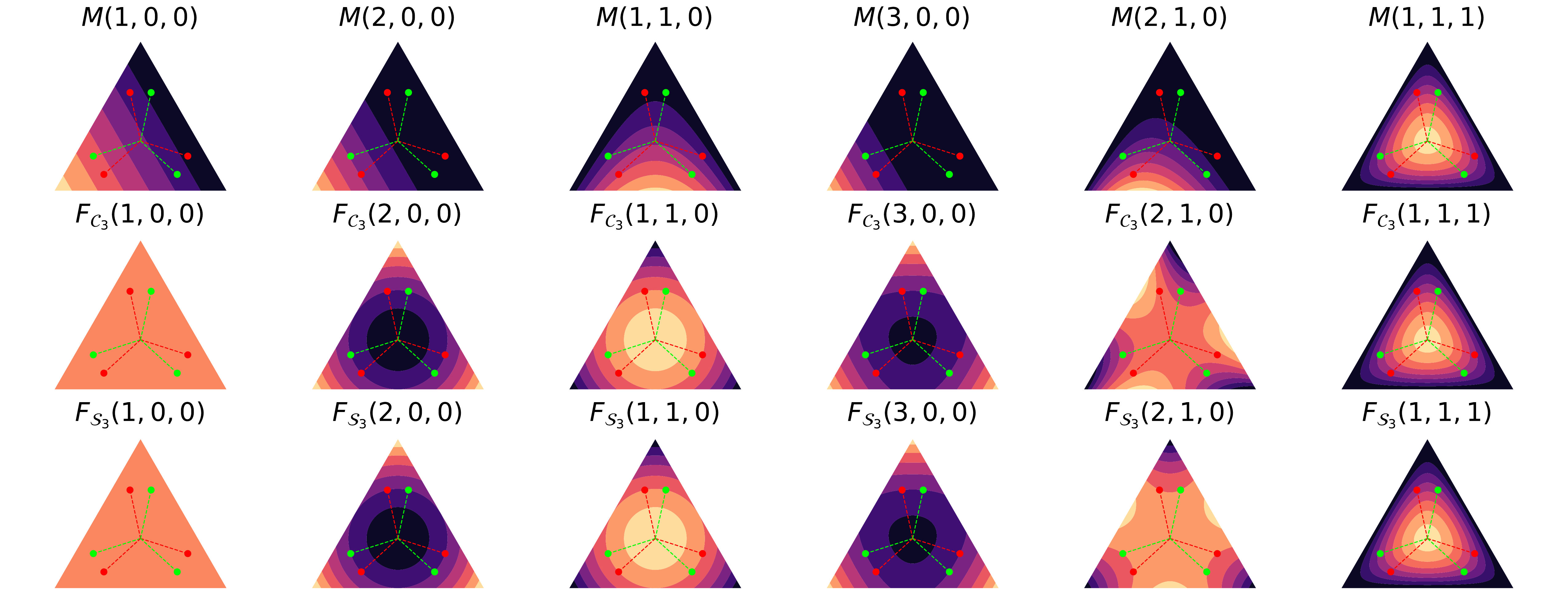}
        \caption{\textbf{Monomials and symmetrised monomials in 3-point simplex}: In the following figures we show contour plots of monomials $M(\vb{k})$ and symmetrised monomials $F_G(\vb{k})$ in 3-points simplex for $\sum k_i \leq 3$ and $G =\mathcal{C}_3,\,\mathcal{S}_3$; Additionally, we show cyclically symmetrised solution of \eqref{eq:3-point_non_des} (red) and its mirror image (green). Note that for all $F_{\mathcal{S}_3}$ all six points lie on the same isoline of the function, thus noting that average over red and green points will be equal. However, in particular case of $F_{\mathcal{C}_3}(2,1,0)$, we find that red points lie on a different isoline than green points, thus leading to different values of averages depending on the mirror symmetry -- equivalent to statement given in Eq. \eqref{eq:av_inequal}; Additionally, $F_{\mathcal{C}_3}(2,1,0)$ is the only polynomial without mirror symmetry, and thus cannot be decomposed of the remaining functions.}
        \label{fig:polynomials}
    \end{figure}
	
	\section{Correction for Theorem 3.2}
	
	In this section we discuss a minor correction which is necessary for Lemma 3.1 to work properly, thus correcting the statement of Theorem 3.2.
	
	\subsection{Discussion of correction}
	
	From now on we will depart from notation introduced in~\cite{baladram2018explicit} and we will define the following polynomials symmetrised with respect to a given subgroup $G\subset \mathcal{S}_d$
	\begin{equation}
		F_G\qty(\vb{k}) = \sum_{\pi\in G} M\qty(\pi(\vb{k}))
	\end{equation}
	and, in particular, we define $F \equiv F_{\mathcal{S}_d}$ as the completely symmetric polynomials with a given set of powers.  Furthermore, note that the following is true
	\begin{equation}\label{eq:coset_decom}
		F\qty(\vb{k}) \equiv F_{\mathcal{S}_d}\qty(\vb{k})  = \sum_{\pi \in\mathcal{S}_d/G} F_{G}\qty(\pi(\vb{k})) =  \sum_{\pi\in \mathcal{S}_d} M\qty(\pi(\vb{k}))
	\end{equation}
	where $\mathcal{S}_d/ G$ is the set of left-cosets of $G$ with respect to $\mathcal{S}_d$.  Additionally it is true that
	\begin{equation}
		F_G\qty(\vb{k}) = F_G\qty(\pi(\vb{k}))
	\end{equation}
	for all $\pi\in G$. 
	
	Note that by replacing the monomials symmetrised with respect to the cyclic group with fully symmetrised monomials we restore validity of Lemma 3.1 and Theorem 3.2 with proofs virtually unaffected, save for replacing the cyclic group $\mathcal{C}_d$ with symmetric group $\mathcal{S}_d$ wherever it appears.
	
	Additionally, let us note that it is enough to consider $\vb{k}$ which are ordered non-increasingly, and thus for a fixed degree $t$ they can be put in one-to-one correspondence with the standard Young tableaux, a notation which we will adopt for simplicity. To provide a basic example, we will write
	\begin{equation}
		F(2,1,0,\hdots,0) = \ydiagram{2,1}.
	\end{equation}
	
	Let us consider a full example of $t = 4$. To this end, we need
	\begin{subequations}
		\begin{alignat}{7}
			\ydiagram{1} & =& \ydiagram{1,1,1,1}& + 3& \ydiagram{2,1,1} &+ 2&\ydiagram{2,2}& + 3&\ydiagram{3,1}& + &\ydiagram{4}, \\
			\ydiagram{1,1} & = &\ydiagram{1,1,1,1}& + 2&\ydiagram{2,1,1} &+& \ydiagram{2,2}& + &\ydiagram{3,1}, \\
			\ydiagram{2} & =&&& \ydiagram{2,1,1} & +& \ydiagram{2,2} &+ 2&\ydiagram{3,1}& + &\ydiagram{4}, \\
			\ydiagram{1,1,1} & =& \ydiagram{1,1,1,1}& +& \ydiagram{2,1,1}, \\
			\ydiagram{2,1} & = &&&\ydiagram{2,1,1}& +& \ydiagram{2,2}& + &\ydiagram{3,1}, \\
			\ydiagram{3} & = &&&&&&&\ydiagram{3,1}& +& \ydiagram{4}, \\
			\ydiagram{1,1,1,1} & = &  \ydiagram{1,1,1,1}.
		\end{alignat}
	\end{subequations}
	Rudimentary check shows that there are only 4 linearly independent equations in this system. Thus, choice of $F(i,0,0)$ with $i \leq 4$ leads to show that Lemma 3.1, modified per description to accommodate full symmetric group instead of cyclic group, works in this case.
	
	The change propagates to corollaries of Theorem 3.2.  However, Corollary 4.1 is essentially unaffected, as probability vectors of the form $(a, b, \hdots, b)$ are invariant under operations from $\mathcal{S}_d/\mathcal{C}_n$, thus creating excess degeneration; similarly, the table under Corollary 4.1 remains valid.
	
	Statement of Corollary 4.2, however, requires rectification, as the actual set of probability vectors constituting the design in question should be given by
	\begin{equation}
		X = \bigcup_{i=1}^s \bigcup_{\pi\in\mathcal{S}_d} \qty{\pi\qty(\vb{p}^{(i)})}
	\end{equation}
	where the change of $\mathcal{C}_d$ to $\mathcal{S}_d$ should be noted. In this respect, $\abs{X} = 6s$ unless $p^{(i)}_2 = p^{(i)}_3$ is set explicitly for some values of $i$, as has been done for the table under Corollary 4.2.
	
	Thus, solution of the equation $60x^3 - 60x^2 + 15x - 1 = 0$, as given in~\cite{baladram2018explicit}, generates a 6-point fully symmetric 3-design in $\Delta^2$. 
	
	\subsection{Lemma 3.1 and Theorem 3.2 -- corrected versions}
	
	Below, we give the corrected versions of the statements given in~\cite{baladram2018explicit}; for this particular section, we use exactly the same notation as in the original work, except for $\mathcal{S}_d$ denoting the symmetric group and minor necessary adjustments.
	
	\begin{lem}[Lemma 3.1 from~\cite{baladram2018explicit} with $\mathcal{S}_d$]
		Let $n, k\in\mathbb{Z}_{>0},\,r_1,\,r_2\in\mathbb{R}$. Define
		\begin{align*}
			V &:= \mathbb{R}[x_1,x_2,\hdots,x_n]\setminus \qty(\sum_{i=1}^n x_i - 1) \\
			F(k_1,k_2,\hdots,k_n) &:= r_1 \sum_{\pi\in\mathcal{S}_d} 
			x^{k_1}_{\pi(1)}
			x^{k_2}_{\pi(2)}
			\hdots
			x^{k_n}_{\pi(n)} - r_2 \text{B}\qty((k_i+1)_{i=1}^n) \in V.
		\end{align*}
		Let $V_k$ be a subspace spanned by $F(j,0,\hdots,0), 0\leq j\leq k$. Then, for non-negative integers $k_1,\hdots,k_n$ such that $\sum_{i=1}^n k_i = k$ one has $F(k_1,\hdots,k_n)\in V_k$.
	\end{lem}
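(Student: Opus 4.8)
The plan is to recast the claim in the language of symmetric polynomials, extract from the affine constraint $\sum_i x_i=1$ a ``raising'' recursion on the symmetrised monomials, and then try to close a finite linear system by induction, with the one-row polynomials $F(j,0,\dots,0)$ as pivots.

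First I would record the recursion. Since $\sum_{i=1}^n x_i=1$ in $V$, for any exponent vector $\vb k$ one has $M(\vb k)=M(\vb k)\sum_{j}x_j=\sum_{j=1}^n M\bigl((k_i+\delta^{ij})_i\bigr)$; averaging over $\mathcal{S}_d$ (a permutation merely relabels the variables) and using the parallel identity for the Beta term, $B\bigl((k_i+1)_i\bigr)=\sum_{j=1}^n B\bigl((k_i+1+\delta^{ij})_i\bigr)$ -- equivalently $\sum_j(k_j+1)=n+\sum_i k_i$ after clearing $\Gamma$'s -- one gets, in $V$,
\[
  F(k_1,\dots,k_n)=\sum_{j=1}^n F\bigl((k_i+\delta^{ij})_{i=1}^n\bigr).
\]
Since $F$ is invariant under permuting its arguments, it suffices to treat exponent vectors with weakly decreasing entries, i.e.\ partitions $\lambda$ of $k$ with at most $n$ parts, which I encode by Young diagrams as in the text.

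Next, the elimination. I would prove $F(\lambda)\in V_{|\lambda|}$ for every such $\lambda$ by induction on $|\lambda|$, and within a fixed degree by a second induction descending from the one-row shape through the dominance order. The cases $|\lambda|=0$ and $\ell(\lambda)=1$ are immediate, as $F(0,\dots,0),\dots,F(k,0,\dots,0)$ are the declared spanning set of $V_k$. For $\ell(\lambda)\ge2$, delete a corner box of $\lambda$ to obtain $\mu$ with $|\mu|=|\lambda|-1$ and apply the recursion to $\mu$,
\[
  F(\mu)=\sum_{j=1}^n F\bigl((\mu_i+\delta^{ij})_i\bigr)=c_\lambda\,F(\lambda)+\sum_\nu c_\nu\,F(\nu),\qquad c_\lambda\ge1,
\]
the $\nu$ ranging over the other partitions reachable from $\mu$ by adjoining one box. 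When the deleted box lies in the last row of $\lambda$ and $\lambda$ ends in a part equal to $1$, all such $\nu$ strictly dominate $\lambda$ while $\mu$ has strictly smaller degree; so, with the two inductions arranged compatibly, every term other than $c_\lambda F(\lambda)$ is already known to lie in $V_{|\lambda|}$, and one solves $F(\lambda)=c_\lambda^{-1}\bigl(F(\mu)-\sum_\nu c_\nu F(\nu)\bigr)$. Specialising to $|\lambda|=k$ and using $V_j\subseteq V_k$ for $j\le k$ gives the lemma.

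The main obstacle is exactly this last step: ensuring the elimination closes, i.e.\ that some ordering makes the system triangular over the pivots $F(j,0,\dots,0)$. It closes for hook shapes, but it breaks already for $\lambda=(2,2,0,\dots,0)$, whose natural recursion -- obtained from $\mu=(2,1,0,\dots,0)$ -- also carries $F(2,1,1,0,\dots,0)$, a shape \emph{less} dominant than $(2,2,0,\dots,0)$; so the relations are not triangular and the raising recursion by itself does not suffice. A dimension count makes this sharp: $V_k$ is spanned by the $k+1$ polynomials $F(j,0,\dots,0)$, of which $F(0,\dots,0)$ and $F(1,0,\dots,0)$ are constants in $V$, so $\dim V_k\le k$; whereas (for $r_1\neq0$) the polynomials $F(\lambda)$ over the partitions $\lambda$ of $k$ with at most $n$ parts are linearly independent in $V$, and there are already five such when $k=4,\ n\ge4$ (the shapes $(4),(3,1),(2,2),(2,1,1),(1,1,1,1)$). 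I therefore expect to find that the statement as written needs a hypothesis bounding $k$ relative to $n$ -- the bare recursion does close for $k\le3$ and every $n$, and in a handful of further small cases -- or else an extra ingredient, namely the relations forced by having only $n$ variables (vanishing of elementary symmetric functions of degree exceeding $n$), which the raising recursion cannot see. That is the point where I anticipate the proof requires more than a verbatim transcription of the $\mathcal{C}_d$ argument.
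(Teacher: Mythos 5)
Your raising recursion is precisely the mechanism the paper (and the original reference it defers to) relies on, and your two-step induction does reproduce the intended argument for hook shapes, hence for every $k\le 3$. The crucial part of your proposal, however, is the closing observation, and it is correct: the elimination cannot be made to close for $k\ge 4$ and $n\ge 4$, because the lemma as stated is false there. In $V$ one has $m_{(1)}=\sum_i x_i=1$, so $V_k\subseteq\operatorname{span}\qty(1,m_{(2)},\dots,m_{(k)})$, a space of dimension at most $k$, while the monomial symmetric polynomials $m_\lambda$ with $\lambda\vdash k$ and at most $n$ parts remain linearly independent in $V$ (a nonzero homogeneous polynomial cannot lie in the ideal generated by $\sum_i x_i-1$). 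For $k=4$, $n\ge4$, writing $1=\qty(\sum_i x_i)^4$, $m_{(2)}$, $m_{(3)}$, $m_{(4)}$ in the basis $\qty{m_\lambda}_{\lambda\vdash 4}$ ordered as $(4),(3,1),(2,2),(2,1,1),(1,1,1,1)$ gives coordinate vectors $(1,4,6,12,24)$, $(1,2,2,2,0)$, $(1,1,0,0,0)$, $(1,0,0,0,0)$, whose span does not contain $m_{(2,2)}$; hence $F(2,2,0,\dots,0)\notin V_4$ for every $r_1\neq0$ and every $r_2$. One caveat on your supporting claim: the $F(\lambda)$, $\lambda\vdash k$, are \emph{not} always linearly independent --- at the single ratio $r_2=n!\,(n-1)!\,r_1$ (exactly the one relevant to designs) the identity $\qty(\sum_i x_i)^k=1$ induces one relation among them --- but the explicit non-containment above is insensitive to $r_2$, so your conclusion survives.

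Be aware that the paper does not supply the ingredient you are looking for: its proof is a deferral to the original argument with $\mathcal{C}_d$ replaced by $\mathcal{S}_d$, and its $t=4$ worked example, read for $n\ge4$, exhibits exactly the degeneracy you identified. The nontrivial equations there put $F(3,1)$, $F(2,1,1)+F(2,2)+F(3,1)$ and $F(1,1,1,1)+F(2,1,1)$ into $V_4$ but leave the direction $F(2,2)-F(2,1,1)$ --- your $(2,2)$ versus $(2,1,1)$ obstruction --- undetermined; the advertised count of four independent equations includes the tautology $F(1,1,1,1)=F(1,1,1,1)$. (For $n=3$ there are only four partitions of $4$ and the elimination does close, which may be what the three-slot notation $F(i,0,0)$ there intends.) So the statement needs the restriction you anticipate: either $k\le3$, or more generally that the number of partitions of $k$ into at most $n$ parts not exceed the number of available pivots. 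With $k\le3$ your hook-shape argument is already a complete proof, and that is all the paper actually uses in its downstream $3$-design constructions.
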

	\begin{thm}[Theorem 3.2 from~\cite{baladram2018explicit} with $\mathcal{S}_d$]\label{thm:new_thm_32}
		Let $n,s,t\in\mathbb{Z}_{>0},\,n\geq2,\,(x_{i,j})_{1,\leq i\leq s,1\leq j\leq n}\in(0,1)^{s\times n}$ where $\sum_{j=1}^n d_{i,j} = 1$ for all $i$. Also, let 
		\begin{equation}
			X = \qty{(x_{i,\pi(1)},x_{i,\pi(2)},\hdots,x_{i,\pi(n)})\in\mathbb{R}^n\mid 1\leq i\leq s,\pi\in\mathcal{S}_d}.
		\end{equation} 
		Then, the multiset $X$ is a simplex $t$-design if and only if
		\begin{equation}
			\frac{1}{sn} \sum_{i=1}^s\sum_{j=1}^n x_{i,j}^k = \frac{\Gamma(n)\Gamma(k+1)}{\Gamma(n+k)}\quad (1\leq k \leq t).
		\end{equation}
	\end{thm}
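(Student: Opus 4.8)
\noindent The plan is to exploit the $\mathcal{S}_d$-invariance of $X$ to replace the (exponentially many) monomial conditions defining a $t$-design by conditions on fully symmetrised monomials, and then to invoke the corrected Lemma~3.1 above to collapse these to the $t$ power-sum identities in the statement.

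First I would unwind the definition: $X$ is a simplex $t$-design precisely when $\ev{M(\vb{k})}_X=\ev{M(\vb{k})}_{\Delta^{n-1}}$ for every exponent vector $\vb{k}$ with $\sum_i k_i\le t$. Since, as a multiset, $X=\bigcup_{i=1}^s\bigcup_{\pi\in\mathcal{S}_d}\{\pi(\vb{x}^{(i)})\}$ is a union of full $\mathcal{S}_d$-orbits (each row contributing $n!$ tuples counted with multiplicity, so $\abs{X}=s\,n!$), one has $\ev{M(\vb{k})}_X=\frac{1}{s\,n!}\sum_{i=1}^s F(\vb{k})(\vb{x}^{(i)})$ with $F=F_{\mathcal{S}_d}$; and since the flat simplex measure is permutation-invariant, $\ev{M(\vb{k})}_{\Delta^{n-1}}=\ev{M(\pi(\vb{k}))}_{\Delta^{n-1}}$ for every $\pi$. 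Averaging the monomial identity over $\pi\in\mathcal{S}_d$ then yields $n!\big(\ev{M(\vb{k})}_X-\ev{M(\vb{k})}_{\Delta^{n-1}}\big)=L(F(\vb{k}))$, where $L(P):=\ev{P}_X-\ev{P}_{\Delta^{n-1}}$ is a linear functional on $V$ (it is well defined on the quotient and annihilates constants, since both averages integrate $1$ to $1$). Hence $X$ is a $t$-design if and only if $L$ annihilates every symmetrised monomial $F(\vb{k})$ of degree at most $t$.

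Next I would feed this into the corrected Lemma~3.1. For any $\vb{k}$ with $\sum_i k_i=k\le t$, the lemma --- applied with any nonzero $r_1$ and any $r_2$, the additive Beta constant being irrelevant because $L$ kills constants --- places $F(\vb{k})$ modulo constants inside $V_k=\operatorname{span}\{F(j,0,\dots,0):0\le j\le k\}\subseteq V_t$. Therefore $L$ annihilates all symmetrised monomials of degree $\le t$ if and only if $L(F(j,0,\dots,0))=0$ for $j=1,\dots,t$, the case $j=0$ being automatic. Finally I would evaluate these $t$ surviving conditions explicitly: $F(j,0,\dots,0)=(n-1)!\sum_{m=1}^n x_m^j$, the $\mathcal{S}_d$-invariance of $X$ makes $\ev{x_m^j}_X=\frac{1}{sn}\sum_{i=1}^s\sum_{m'=1}^n x_{i,m'}^{\,j}$ independent of $m$, and the Beta formula gives $\ev{x_m^j}_{\Delta^{n-1}}=(n-1)!\,\mathrm{B}(j+1,1,\dots,1)=\Gamma(n)\Gamma(j+1)/\Gamma(n+j)$; so $L(F(j,0,\dots,0))=0$ is exactly the asserted identity with $k=j$. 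For the ``only if'' direction one can also argue directly, without the lemma, since a $t$-design already matches the single monomial $x_1^k$ for each $k\le t$, which gives the same identity.

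The step I expect to be the real obstacle is not conceptual but bookkeeping: keeping the multiplicity factors straight --- $\abs{X}=s\,n!$, the $(n-1)!$ produced by symmetrising one nonzero exponent, and the $n!$ produced by averaging over $\pi$ --- and handling carefully the fact that when a row $\vb{x}^{(i)}$ has coincident coordinates its $\mathcal{S}_d$-orbit is a proper quotient of $\mathcal{S}_d$ yet still contributes $n!$ tuples to $X$, which is precisely why the statement is phrased for the multiset $X$ rather than the underlying set. Everything else is the linear algebra already carried out in the corrected Lemma~3.1 together with the standard Dirichlet--Beta moments of the flat simplex.
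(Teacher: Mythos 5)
Your proposal is correct and follows essentially the same route as the paper, which simply defers to the original proof of Theorem 3.2 in \cite{baladram2018explicit} with $\mathcal{C}_n$ replaced by $\mathcal{S}_d$: you use the full $\mathcal{S}_d$-invariance of the multiset $X$ and of the flat simplex measure to reduce the monomial conditions to conditions on the fully symmetrised monomials $F(\vb{k})$, invoke the corrected Lemma~3.1 to collapse these to $F(j,0,\dots,0)$ for $1\leq j\leq t$, and evaluate the resulting power-sum moments via the Beta integral. Your explicit identification of the step where full symmetry (rather than cyclic symmetry) is needed --- namely that $\ev{M(\vb{k})}_X=\ev{M(\pi(\vb{k}))}_X$ for \emph{all} $\pi\in\mathcal{S}_d$ --- is exactly the point the correction is about, and your bookkeeping of the multiplicities is sound.
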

	\begin{proof}
		Statement of the proofs given in~\cite{baladram2018explicit} need not be changed, save for replacement of cyclic group $\mathcal{C}_n$ with symmetric group $\mathcal{S}_d$.
	\end{proof}
	
	\section{Extension of results}
	
	In the following two subsections we put forward extended results. First subsection presents construction of simplex 3-(pseudo)designs consisting in $d(d-1)$ points. Next subsection discusses an alternative approach to the original Lemma 3.1 and Theorem 3.2, which alters them to accommodate a new notion of symmetry-restricted simplex designs, example of which can be already found in Corollary 4.2 in~\cite{baladram2018explicit}.
	
	\subsection{Extending proposed method of construction}
	
	To this end, we extend the construction proposed in Corollary 4.2 of~\cite{baladram2018explicit} and consider orbits of probability vectors of the form 
	
	$$\qty(\frac{a}{d-2},\hdots,\frac{a}{d-2},b,1-a-b)\in\Delta^{d-1}$$ 
	as candidates for simplex $3$-designs. Such an orbit has exactly $d(d-1)$ elements, thus providing a generic scaling like $d^{t-1}$. Using Theorem 3.2 in its corrected form, we may restrict ourselves to equations of the form
	\begin{equation}
		\frac{a^k}{(d-2)^{k-1}} + b^k + (1-a-b)^k = d\binom{d+k-1}{k}^{-1}.
	\end{equation}
	for $2\leq k \leq t = 3$. Quadratic case $k=2$ can be solved for $b$ as a function of $a$,
	
	\begin{equation}
		b_\pm = \frac{1}{2}\qty(2-a\pm\sqrt{-\frac{d}{d-2} a^2 + 2a - \frac{d-3}{d+1}}).
	\end{equation}
	This determines bounds on $a$,
	\begin{equation}\label{eq:x_restriction}
		\abs{d(a - 1) +2} \leq \sqrt{\frac{2(d-1)(d-2)}{d+1}}
	\end{equation}
	for which $b\notin \mathbb{R}$. Next, substituting the solution into the condition for $k = 3$, it yields a cubic equation
	\begin{equation}
		d(d^2-1) a^3 - 3(d+2)(d^2-1)a^2 + 3(d^2-4)(d-1)a - (d-2)^2(d+1)= 0.
	\end{equation}
	Based on general form of solutions for cubic equations it can be shown that this equation has, generically, three distinct real roots $a_0, a_1, a_2$. Additionally, we find that two out of three satisfy the restriction \eqref{eq:x_restriction} for $d<10$ and only one for $d\geq 10$; moreover, for $d\geq 6$ the solution satisfying \eqref{eq:x_restriction} is a pseudoprobability distribution with $b_- \leq 0$. This allows us to put forward the following theorem.
	
	\begin{thm}
		There exist simplex 3-pseudodesigns with $\abs{X} = d(d-1)$ elements for all $d \geq 2$; for $d \leq 10$ they are simultaneously simplex $3$-designs.
	\end{thm}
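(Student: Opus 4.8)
The plan is to exhibit the claimed design directly as the $\mathcal{S}_d$-orbit of a single probability vector of the announced shape, and to reduce the design property, via the corrected Theorem~\ref{thm:new_thm_32}, to a two-variable system of power-sum equations. First I would fix
$$\vb{p} = \qty(\tfrac{a}{d-2},\hdots,\tfrac{a}{d-2},b,1-a-b)\in\widetilde{\Delta}^{d-1}$$
and observe that, whenever the three values $\tfrac{a}{d-2}$, $b$, $1-a-b$ are pairwise distinct, the stabiliser of $\vb{p}$ in $\mathcal{S}_d$ is $\mathcal{S}_{d-2}$ (permuting the repeated coordinates), so by orbit--stabiliser the multiset $X=\{\pi(\vb{p}):\pi\in\mathcal{S}_d\}$ has exactly $d!/(d-2)!=d(d-1)$ elements, which is the asserted count and scales as $d^2=d^{t-1}$ with $t=3$.

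Next I would invoke the corrected Theorem~\ref{thm:new_thm_32} with $s=1$, $n=d$, $t=3$: the multiset $X$ is a simplex $3$-design if and only if $\tfrac1d\sum_{j=1}^d p_j^k = d\binom{d+k-1}{k}^{-1}$ for $k=1,2,3$. The $k=1$ equation is just $\sum_j p_j=1$, automatic by construction, so only $k=2,3$ survive, namely
$$\frac{a^k}{(d-2)^{k-1}} + b^k + (1-a-b)^k = d\binom{d+k-1}{k}^{-1},\qquad k=2,3.$$
I would then treat $k=2$ as a quadratic in $b$, obtaining $b_\pm$ as in the excerpt, with $b$ real precisely when the discriminant is non-negative --- which is exactly the constraint \eqref{eq:x_restriction}. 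Because the pair $\{b,1-a-b\}$ has sum $1-a$ fixed and product an explicit quadratic in $a$ determined by the $k=2$ equation, the power-sum identity $b^3+(1-a-b)^3=(1-a)^3-3(1-a)\,b(1-a-b)$ turns the $k=3$ equation into a polynomial equation in $a$ alone --- precisely the stated cubic $d(d^2-1)a^3 - 3(d+2)(d^2-1)a^2 + 3(d^2-4)(d-1)a - (d-2)^2(d+1)=0$.

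It then remains to analyse this cubic. I would (i) compute its discriminant as a polynomial in $d$ and show it is positive for every integer $d\ge 2$, so that there are three distinct real roots $a_0,a_1,a_2$; (ii) localise these roots relative to the admissibility interval cut out by \eqref{eq:x_restriction}, by evaluating the cubic at the endpoints $a_\pm = 1-\tfrac2d\pm\tfrac1d\sqrt{2(d-1)(d-2)/(d+1)}$ and at $\pm\infty$ and invoking the intermediate value theorem, concluding that two roots are admissible for $2\le d\le 9$ and one for $d\ge 10$, so that in all cases at least one pair $(a,b_\pm)$ with $b\in\mathbb{R}$ exists, yielding a simplex $3$-pseudodesign; and (iii) determine the signs of $\tfrac{a}{d-2}$, $b_\pm$, $1-a-b_\pm$ at the admissible roots, showing that for $d\le 10$ at least one admissible root gives a genuine probability vector (hence a bona fide simplex $3$-design), while for $d\ge 11$ the surviving admissible root has $b_-\le 0$, so only a pseudodesign remains. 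Throughout I would also check that at these specific solutions the three coordinate values are pairwise distinct, so that $\abs{X}=d(d-1)$ exactly rather than a proper divisor.

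The hard part will be step (iii) together with the root-localisation in (ii): the cubic's coefficients are polynomials in $d$, and although the qualitative picture (positive discriminant, $b_-\le 0$, exactly one versus two admissible roots) stabilises for moderately large $d$, the crossover region $d\in\{6,\hdots,11\}$ has to be handled by explicit finite case checks rather than by the asymptotic sign analysis. A secondary subtlety is confirming that the boundary locus of \eqref{eq:x_restriction}, where $b_+=b_-=1-\tfrac a2$ and hence $1-a-b=-\tfrac a2<0$, never coincides with the relevant roots, so that the admissible solutions are indeed interior points of $\widetilde{\Delta}^{d-1}$ with full $\mathcal{S}_d$-orbit.
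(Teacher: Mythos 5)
Your proposal follows essentially the same route as the paper: the same $\mathcal{S}_d$-orbit of $\qty(\frac{a}{d-2},\hdots,\frac{a}{d-2},b,1-a-b)$ with orbit size $d(d-1)$, the same reduction via the corrected Theorem~\ref{thm:new_thm_32} to the power-sum conditions for $k=2,3$, the same elimination of $b$ leading to the constraint \eqref{eq:x_restriction} and the same cubic in $a$, followed by root counting and sign analysis. If anything, your plan in steps (i)--(iii) is more explicit than the paper's own treatment, which asserts the existence and admissibility of the roots and documents the proper and improper solutions numerically in Tables~\ref{tab:proper_solutions} and~\ref{tab:improper_solutions} rather than carrying out a discriminant and intermediate-value argument.
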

	
	\noindent In Tables \ref{tab:proper_solutions} and \ref{tab:improper_solutions} we provide values of $a,\,b$ and $1-a-b$ for dimensions up to $d = 10$.
    
	Based on the above restriction, we can put forward the following conjecture
	\begin{conj}
		Consider space $\mathbb{R}^d/\qty(\sum_{i=1}^d x_i = - 1)$ with measure $\sigma$ such that $\vb{x} \notin\Delta^{d-1} \Rightarrow \sigma(\vb{x}) = 0$, constant over the probability simplex, $\vb{x} \in\Delta^{d-1} \Rightarrow \sigma(\vb{x}) = c$ and normalized so that $\sigma(\Delta^{d-1}) = 1$.
		There exists a $t$-design $X\subset \mathbb{R}^d/\qty(\sum_{i=1}^d x_i = - 1)$ for $\sigma$ with $t \leq d$ with $\abs{X} =  d!/(d-t+1)!$ elements. 
	\end{conj}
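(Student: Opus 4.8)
\emph{Proof strategy.} Fix $t\le d$ and put $m:=d-t+1$. The plan is to realise the design as a single $\mathcal{S}_d$-orbit of a probability vector with a ``staircase'' profile,
\[
  \vb{p}=\big(\underbrace{v_0,\dots,v_0}_{m},\,v_1,\dots,v_{t-1}\big),
  \qquad m v_0+\textstyle\sum_{j=1}^{t-1}v_j=1,
\]
with $v_0,\dots,v_{t-1}$ real and pairwise distinct. Then the stabiliser of $\vb{p}$ in $\mathcal{S}_d$ is exactly the subgroup $\mathcal{S}_m$ permuting the $v_0$-block, so by orbit--stabiliser the orbit $X=\{\pi(\vb{p}):\pi\in\mathcal{S}_d\}$ has $\abs{X}=d!/m!=d!/(d-t+1)!$ elements, matching the claimed count. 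By \Cref{thm:new_thm_32} with $n=d$ and $s=1$, this $X$ is a $t$-design for $\sigma$ precisely when
\[
  m\,v_0^{\,k}+\sum_{j=1}^{t-1}v_j^{\,k}
  \;=\; d\binom{d+k-1}{k}^{-1},\qquad k=1,\dots,t,
\]
the right-hand side being $d$ times the $k$-th moment of the $\mathrm{Beta}(1,d-1)$ single-coordinate marginal of $\sigma$. The case $k=1$ is the normalisation already built in, so what remains is a square system of $t-1$ polynomial equations of degrees $2,3,\dots,t$ in the $t-1$ free parameters.

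\emph{Reformulation and complex count.} Equivalently, one is asking for an equal-weight (weight $1/d$) $d$-node quadrature rule for $\mathrm{Beta}(1,d-1)$ that is exact to degree $t$ and has one node of prescribed multiplicity $m$: the empirical measure $\tfrac1d\sum_j\delta_{x_j}$ must match the first $t$ moments of the marginal. Over $\mathbb{C}$, after eliminating one variable with the linear relation, B\'ezout's theorem caps the number of solutions at $\prod_{k=2}^{t}k=t!$, and for $t=3$ this is exactly the cubic in $a$ analysed in the body of the paper, which generically has three distinct real roots. For general $t$, B\'ezout alone cannot certify a real root, so the heart of the argument is an existence proof for a real solution.

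\emph{Existence of a real solution.} I would anchor this at a solvable base case and deform: start from the Gauss--Jacobi rule for $\mathrm{Beta}(1,d-1)$ with $\lceil(t+1)/2\rceil$ nodes --- already exact to degree $\ge t$, with real nodes in $(0,1)$ and positive weights summing to $1$ --- and continuously deform it, clustering nodes into weight-$1/d$ groups and coalescing $m$ of them into the $v_0$-block, while keeping all nodes real and the first $t$ moments pinned (a degree-theoretic/homotopy argument in the spirit of the theory of Chebyshev-type quadrature, which lives in exactly this regime of fixed degree and many equal-weight nodes). A complementary route is a multi-scale asymptotic analysis in $d$ --- the natural scales are $v_0\sim 1/d$ and $v_j\sim 1/\sqrt d$, with strong cancellation in $\sum_j v_j$ --- to settle all large $d$, leaving the finitely many dimensions $d\in\{t,\dots,d_0(t)\}$ to the same elimination-and-discriminant computation that disposes of $t=3$. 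One then checks the genericity hypothesis that the produced $v_0,\dots,v_{t-1}$ are pairwise distinct, so that $\abs{X}$ is \emph{exactly} $d!/(d-t+1)!$; this follows once one verifies that the coincidence loci $v_i=v_j$ are not forced by the equations.

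\emph{Main obstacle.} The crux is precisely the passage from the B\'ezout count, which controls only \emph{complex} solutions, to a guarantee of a \emph{real} solution for \emph{every} pair $(d,t)$ with $t\le d$, uniformly in $d$. Off-the-shelf quadrature existence results are sensitive to the ratio of degree to number of nodes, and here the target values $d\binom{d+k-1}{k}^{-1}$ themselves carry the dimension while one node is forced to have the large multiplicity $m=d-t+1$, so controlling the deformation uniformly in $d$ is delicate. This is also why the statement is phrased for pseudodesigns: confining the nodes to $[0,1]$ (a genuine simplex $t$-design) is an extra system of inequalities which, as the $d\le 10$ threshold at $t=3$ already indicates, is expected to fail for large $d$ --- so only real solvability, not positivity, needs to be, and can hope to be, established in general.
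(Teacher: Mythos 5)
The statement you are proving is stated in the paper as a \emph{conjecture}: the paper offers no proof of it, only the explicit $t=3$ construction (the $\mathcal{S}_d$-orbit of $(a/(d-2),\dots,a/(d-2),b,1-a-b)$, giving $d(d-1)=d!/(d-2)!$ points) as supporting evidence. Your setup correctly reverse-engineers the intended general construction: a single $\mathcal{S}_d$-orbit of a vector with one entry of multiplicity $m=d-t+1$ and $t-1$ further pairwise-distinct entries, orbit size $d!/(d-t+1)!$ by orbit--stabiliser, and reduction via the corrected Theorem~\ref{thm:new_thm_32} (with $s=1$, $n=d$) to the $t-1$ power-sum conditions $m v_0^k+\sum_{j=1}^{t-1}v_j^k = d\binom{d+k-1}{k}^{-1}$ for $2\le k\le t$. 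This is exactly the generalisation the paper's Section on ``Extending proposed method of construction'' points toward, and your identification of the target as an equal-weight quadrature problem for the $\mathrm{Beta}(1,d-1)$ marginal is a useful reframing the paper does not make explicit.

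However, what you have written is a proof \emph{strategy}, not a proof, and the gap you yourself flag as ``the main obstacle'' is precisely the entire content of the conjecture: establishing that the square system of $t-1$ real polynomial equations has a real solution for every pair $(d,t)$ with $t\le d$. The B\'ezout count controls only complex solutions; the proposed homotopy from a Gauss--Jacobi rule is not carried out, and it is not clear how to keep the nodes real while coalescing $m$ of them into a single value and simultaneously pinning all $t$ moments --- the deformation passes through configurations where the Jacobian of the moment map can degenerate, and nothing in the sketch rules out the real solution escaping to infinity or collapsing onto a coincidence locus $v_i=v_j$ (which would change the orbit size). The multi-scale asymptotic route for large $d$ is likewise only named, not executed. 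So your proposal does not close the conjecture; it restates it in quadrature language and verifies consistency with the paper's proven $t=3$ case. That is a reasonable reading of the paper's intent, but be explicit that steps beyond the setup remain conjectural rather than presenting them as a proof outline whose completion is routine.
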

	
	\begin{table}[h]
		\centering
		\begin{tabular}{|c|c|c|c|c|c|c|c|c|c|}
			\hline
			$d$ & 3 & $4_1$ & $4_2$ & $5_1$ & $5_2$ & 6 & 7 &  8 & 9\\ \hline\hline
			$a$ &  0.659 & 0.376 & 0.190 & 0.475 & 0.252 & 0.303 & 0.345 & 0.380 & 0.410\\
			\hline
			$b$ &  0.232 & 0.571 & 0.569 & 0.508 & 0.502 & 0.448 & 0.404 & 0.365 & 0.328\\
			\hline
			$1-a-b$ & 0.109 & 0.052 & 0.241 & 0.017 & 0.246 & 0.249 & 0.252 & 0.255 & 0.262 \\
			\hline
		\end{tabular}
		\caption{\textbf{Proper solutions: }Values of $a$, $b$ and $1-a-b$ for $d(d-1)$-point permutationally invariant 3-designs in $\Delta^{d-1}$. Note that for $d = 4, 5$ there are two distinct solutions, denoted with an additional subscript index.}
		\label{tab:proper_solutions}
	\end{table}
	
	\begin{table}[h]
		\centering
		\begin{tabular}{|c|c|c|c|c||c||c||c|}
			\hline
			$d$ & 6 & 7 &  8 & 9 & 16 & 25 & 100  \\ \hline\hline
			$a$ & 0.546 & 0.601 & 0.644 & 0.678 & 0.808 & 0.874 & 0.967\\
			\hline
			$b$ & 0.459  & 0.421 & 0.390 & 0.364 & 0.258 & 0.197 & 0.086\\
			\hline
			$1-a-b$ & -0.006 & -0.022 &  -0.034 & -0.042 & -0.066 & -0.070 & -0.053\\
			\hline
		\end{tabular}
		\caption{\textbf{Improper solutions: }Values of $a$, $b$ and $1-a-b$ for $d(d-1)$-point permutationally invariant 3-designs in $\Delta^{d-1}$.}
		\label{tab:improper_solutions}
	\end{table}
	\subsection{Symmetry restricted simplex designs}
	
	Let us go back to the Eq.~\eqref{eq:coset_decom}, which shows that any fully symmetrised monomial $F(\vb{k})$ can be decomposed into monomials $F_G(\vb{k})$ symmetrised with respect to a subgroup $G$. Thus replacing set $X$ in Theorem \ref{thm:new_thm_32} with an arbitrary set $Y\subset\Delta^{d-1}$ guarantees that 
	\begin{equation}
		\ev{F(\vb{k})}_Y = \ev{F(\vb{k})}_{\Delta^{d-1}} 
	\end{equation}
	but does not say anything about non-symmetric monomials $M(\vb{k})$ by themselves. One may, however, be interested only in certain symmetric averages which are invariant under elements of subgroup $G$. We will call a sequence $\vb{k}$ \emph{$G$-invariant} if
	\begin{equation}
		\forall \pi\in \mathcal{S}_d/G \,\exists \pi' \in G: \pi'(\pi(\vb{k})) = \vb{k} \quad\Longleftrightarrow\quad \forall \pi\in\mathcal{S}_d/G:  F_G(\vb{k}) = F_G(\pi(\vb{k})).
	\end{equation}
	This, in turn, implies that the averages of symmetrised monomials agree,
	\begin{equation}
		F(\vb{k}) = \sum_{\pi\in\mathcal{S}_d/G} F_G(\pi(\vb{k})) = \abs{\mathcal{S}_d/G} F_G(\vb{k}).
	\end{equation}
	
	With this observation in mind we can introduce a notion of \emph{symmetry-restricted simplex designs}. We will call a set $X\subset\Delta^{d-1}$ a $G$-symmetry restricted simplex $t$-designs if the averages over $X$ and over $\Delta^{d-1}$ agree,
	\begin{equation}
		\ev{F_G(\vb{k})}_{\Delta^{d-1}} =
		\ev{F_G(\vb{k})}_{X} 
	\end{equation}
	for all $\vb{k}$ $G$-invariant.
	
	Now, we put forward the following lemma and theorem, which are essentially a second way in which Lemma 3.1 and Theorem 3.2 of~\cite{baladram2018explicit} can be corrected.
	
	\begin{lem}[Lemma 3.1 from~\cite{baladram2018explicit} with $G$-invariant sequences $\vb{k}$]
		Let $n, k\in\mathbb{Z}_{>0},\,r_1,\,r_2\in\mathbb{R}$. Define
		\begin{align*}
			V &:= \mathbb{R}[x_1,x_2,\hdots,x_n]\setminus \qty(\sum_{i=1}^n x_i - 1) \\
			F_G(k_1,k_2,\hdots,k_n) &:= r_1 \sum_{\pi\in G} 
			x^{k_1}_{\pi(1)}
			x^{k_2}_{\pi(2)}
			\hdots
			x^{k_n}_{\pi(n)} - r_2 \text{B}\qty((k_i+1)_{i=1}^n) \in V.
		\end{align*}
		Let $V_k$ be a subspace spanned by $F(j,0,\hdots,0), 0\leq j\leq k$. Then, for non-negative integers $\vb{k} = \qty(k_1,\hdots,k_n)$ such that $\sum_{i=1}^n k_i = k$ and $\vb{k}$ is $G$-invariant one has $F_G(k_1,\hdots,k_n)\in V$.
	\end{lem}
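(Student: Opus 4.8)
The plan is to reduce the $G$-symmetrised claim to the symmetric-group form of Lemma~3.1 proved above, using the hypothesis that $\vb{k}$ is $G$-invariant. For such an index the $G$-symmetrised monomial agrees, up to the inhomogeneous Beta term, with a fixed scalar multiple of the fully symmetrised monomial $F(\vb{k})\equiv F_{\mathcal{S}_d}(\vb{k})$, and the latter already lies in $V_k$. (I read the conclusion as $F_G(\vb{k})\in V_k$, parallel to the $\mathcal{S}_d$ version; the bare statement $F_G(\vb{k})\in V$ holds by construction.)

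Write $m=\abs{\mathcal{S}_d/G}$. The first step is to collapse the coset sum: because $F_G(\pi(\vb{k}))=F_G(\vb{k})$ for every $\pi\in\mathcal{S}_d/G$, splitting $\mathcal{S}_d$ into left cosets of $G$ in \eqref{eq:coset_decom} and using that $\text{B}\qty((k_i+1)_{i=1}^n)$ is invariant under permuting its arguments gives $r_1\sum_{\pi\in\mathcal{S}_d}M(\pi(\vb{k}))=m\,r_1\sum_{\pi\in G}M(\pi(\vb{k}))$, hence
\begin{equation*}
F_G(\vb{k}) \;=\; \frac1m\,F(\vb{k}) \;-\; \qty(1-\frac1m)\,r_2\,\text{B}\qty((k_i+1)_{i=1}^n).
\end{equation*}
The second step applies the symmetric-group form of Lemma~3.1 with the same $n,k,r_1,r_2$: since $\sum_i k_i=k$ it yields $F(\vb{k})\in V_k$. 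The third step absorbs the leftover constant: $F(0,\dots,0)=r_1\,n!-r_2\,\text{B}\qty((1,\dots,1))$ is itself a constant and lies in $V_0\subseteq V_k$, so $V_k$ contains all constants; therefore $F_G(\vb{k})$, an $\mathbb{R}$-linear combination of $F(\vb{k})\in V_k$ and a constant in $V_k$, lies in $V_k$.

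The step requiring genuine care is the third one. The $\mathcal{S}_d$- and $G$-symmetrisations carry the \emph{same} Beta term but differ by the factor $m$ on the homogeneous part, so the reduction to $F(\vb{k})$ is exact only up to the constant displayed above, and that constant is in $V_k$ precisely when $F(0,\dots,0)\neq0$, i.e.\ when $r_2\neq r_1\,n!\,(n-1)!$. For generic $(r_1,r_2)$ this is automatic; in the exceptional case $r_2=r_1\,n!\,(n-1)!$ one checks (using $\sum_i x_i\equiv1$) that $F(0,\dots,0)$ and $F(1,0,\dots,0)$ both vanish, so the claim there should be stated with the mild proviso $r_2\neq r_1\,n!\,(n-1)!$ — harmless for all intended uses. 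Modulo this, the remaining ingredients (the coset collapse and the appeal to the already-corrected $\mathcal{S}_d$ lemma) are routine.
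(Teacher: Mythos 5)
Your proposal is correct in substance, but it proves the lemma by a different route than the paper does. The paper's own proof is a one-line assertion that the argument "follows in exact same line as Lemma 3.1 of the original work'' --- i.e.\ one reruns the original inductive argument based on the recursion $F_G(\vb{k})=\sum_j F_G\qty((k_i+\delta^{ij})_i)$ (which holds for any subgroup $G$, since the power-sum and Beta parts satisfy the same recursion), now restricted to $G$-invariant exponent sequences. You instead \emph{reduce} the $G$-invariant case to the already-corrected $\mathcal{S}_d$ lemma via the coset collapse $\tilde F_{\mathcal{S}_d}(\vb{k})=\abs{\mathcal{S}_d/G}\,\tilde F_G(\vb{k})$; this identity is exactly the display the paper records just before the lemma, so your route is very much in the spirit of the surrounding text, and it is arguably more convincing than the paper's proof-by-reference (the original induction steps through exponent vectors $\vb{k}+e_j$ that need not remain $G$-invariant, so "same line as the original'' is not self-evidently rigorous there). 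Your reduction buys two things the paper glosses over. First, you correctly note that the stated conclusion $F_G(\vb{k})\in V$ is vacuous and must be read as $F_G(\vb{k})\in V_k$. Second, because the affine shift $-r_2\,\mathrm{B}\qty((k_i+1)_i)$ does not rescale with the coset index, the proportionality between $F_G$ and $F$ holds only up to the constant $\qty(1-\tfrac1m)r_2\,\mathrm{B}$, and absorbing it into $V_k$ requires a nonzero constant in $V_k$; since $F(1,0,\dots,0)=\tfrac1n F(0,\dots,0)$, the constants in $V_k$ are exactly the multiples of $F(0,\dots,0)=r_1 n!-r_2/(n-1)!$, so your proviso $r_2\neq r_1\,n!\,(n-1)!$ is a genuine (if harmless in the intended applications) caveat that the direct inductive proof, which never leaves the $F_G$ basis, would state slightly differently (with $\abs{G}$ in place of $n!$ if $V_k$ is read as spanned by $F_G(j,0,\dots,0)$). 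Either way, the mathematics checks out: the coset collapse, the appeal to the $\mathcal{S}_d$ lemma, and the constant bookkeeping are all sound.
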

	
	\begin{thm}[Theorem 3.2 from~\cite{baladram2018explicit} with $G$-restricted designs]\label{thm:new_thm_32_2}
		Let $n,s,t\in\mathbb{Z}_{>0},\,n\geq2,\,(x_{i,j})_{1,\leq i\leq s,1\leq j\leq n}\in(0,1)^{s\times n}$ where $\sum_{j=1}^n d_{i,j} = 1$ for all $i$. Also, let 
		\begin{equation}
			X = \qty{(x_{i,\pi(1)},x_{i,\pi(2)},\hdots,x_{i,\pi(n)})\in\mathbb{R}^n\mid 1\leq i\leq s,\pi\in G}.
		\end{equation} 
		Then, the multiset $X$ is a $G$-restricted simplex $t$-design if and only if
		\begin{equation}
			\frac{1}{sn} \sum_{i=1}^s\sum_{j=1}^n x_{i,j}^k = \frac{\Gamma(n)\Gamma(k+1)}{\Gamma(n+k)}\quad (1\leq k \leq t).
		\end{equation}
	\end{thm}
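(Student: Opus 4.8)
The plan is to mirror the structure of the proof of Theorem~\ref{thm:new_thm_32}, replacing the fully symmetric orbit by the $G$-orbit and invoking the $G$-invariant version of Lemma 3.1 in place of its $\mathcal{S}_d$-counterpart. First I would unpack what it means for $X$ to be a $G$-restricted simplex $t$-design: by definition one must check $\ev{F_G(\vb{k})}_X = \ev{F_G(\vb{k})}_{\Delta^{n-1}}$ for every $G$-invariant $\vb{k}$ with $\sum_i k_i \le t$. Using linearity and the property $F_G(\vb{k}) = \sum_j F_G((k_i+\delta^{ij})_i)$ that comes from sum-normalisation, it suffices to restrict attention to $G$-invariant $\vb{k}$ with $\sum_i k_i = k$ for each fixed $1 \le k \le t$. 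Then the $G$-invariant Lemma 3.1 tells us that each such $F_G(\vb{k})$, once written as the affine functional $r_1 \sum_{\pi \in G} \prod_i x_{\pi(i)}^{k_i} - r_2 B((k_i+1)_i)$, lies in the span $V_k$ of the univariate functionals $F(j,0,\dots,0)$ for $0 \le j \le k$. Hence matching averages for all $G$-invariant $\vb{k}$ reduces to matching them for the generators $F(j,0,\dots,0)$, i.e. to the single family of power-sum conditions.

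Next I would carry out the translation between the abstract average condition and the explicit moment equations in the statement. The key computation is $\ev{F(j,0,\dots,0)}_{\Delta^{n-1}} = \ev{\sum_{\pi\in\mathcal{S}_d}\prod_i x_{\pi(i)}^{k_i}}_{\Delta^{n-1}}$ for $\vb{k}=(j,0,\dots,0)$, which by the Beta-function formula from the Preliminaries equals $(n-1)!\,B(j+1,1,\dots,1)\cdot n = n\cdot\frac{\Gamma(n)\Gamma(j+1)}{\Gamma(n+j)}$ up to the combinatorial count of the orbit; after normalising by $|X| = s|G|$ and observing that $\sum_{\pi\in G}\prod_i x_{i,\pi(i)}^{\delta^{1i}} = \frac{|G|}{n}\sum_j x_{i,j}^{\,j}$ by orbit-averaging (each coordinate value appears equally often in the $G$-orbit since $G$ acts transitively on the index set — this is where the hypothesis $G$ transitive, implicit in the cyclic/symmetric setting, enters), one lands exactly on $\frac{1}{sn}\sum_{i=1}^s\sum_{j=1}^n x_{i,j}^k = \frac{\Gamma(n)\Gamma(k+1)}{\Gamma(n+k)}$ for $1 \le k \le t$. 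Running this equivalence in both directions establishes the "if and only if".

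The main obstacle I anticipate is the orbit-counting bookkeeping: for a general subgroup $G \subset \mathcal{S}_d$ the orbit of a probability vector need not have size $|G|$ (stabilisers may be nontrivial, exactly the degeneracy phenomenon flagged for vectors of the form $(a,b,\dots,b)$ after Corollary 4.1), and the count of how often a given coordinate entry appears across the $G$-orbit, as well as the multiplicity with which a given $\vb{k}$ arises among cosets, must be tracked carefully so that the normalisation constants on both sides genuinely cancel. This is precisely the place where the naive cyclic argument in~\cite{baladram2018explicit} broke down, so I would be careful to phrase the sums as multiset sums (as the theorem statement already does, writing "multiset $X$") and to use only the $G$-invariance of $\vb{k}$ — never a stronger symmetry — when collapsing $F(\vb{k}) = |\mathcal{S}_d/G|\,F_G(\vb{k})$. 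Once that combinatorial identity is pinned down, the rest is the same linear-algebra reduction as in the original proof, so I would state that "the proof proceeds exactly as in the proof of Theorem~\ref{thm:new_thm_32}, with $\mathcal{S}_d$ replaced by $G$ and Lemma 3.1 replaced by its $G$-invariant version, the only additional ingredient being the orbit-counting identity above."
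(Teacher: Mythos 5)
Your proposal follows essentially the same route as the paper, whose entire proof consists of the single remark that the arguments of Lemma 3.1 and Theorem 3.2 of~\cite{baladram2018explicit} carry over verbatim with the cyclic group replaced by $G$; your reduction to the power-sum generators $F(j,0,\dots,0)$ via the $G$-invariant lemma is exactly that argument spelled out. Your observation that the orbit-averaging identity $\sum_{\pi\in G} x_{\pi(1)}^k = \tfrac{\abs{G}}{n}\sum_{j} x_j^k$, and indeed the $G$-invariance of the sequence $(k,0,\dots,0)$ itself, require $G$ to act transitively on the coordinate indices is a genuine hypothesis that the paper leaves implicit and that deserves to be stated explicitly.
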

	\begin{proof}
		Both proofs follow in exact same line as Lemma 3.1 and Theorem 3.2 of~\cite{baladram2018explicit}. 
	\end{proof}
	Let us note that in this sense the 3-point configuration in $\Delta^2$ put forward in Corollary 4.2 in~\cite{baladram2018explicit} is a $\mathcal{C}_3$-restricted 3-design, as it reproduces the averages for $\vb{k} = (3,0,0)$ and $(1,1,1)$, which are indeed $\mathcal{C}_3$-invariant, whereas it fails to do it for $(2,1,0)$ and $(1,2,0)$, which are not $\mathcal{C}_3$-invariant.
	
	\section{Summary}
	
%

In this short communication, we have presented a counterexample to Lemma 3.1 and Theorem 3.2 from~\cite{baladram2018explicit}, outlining the mechanism by which the original argument fails. We then proposed two approaches to resolve the identified issue: the first, formalized in Theorem~\ref{thm:new_thm_32}, extends the group from a cyclic group to the full symmetric group; the second introduces a weaker notion of symmetry-restricted simplex $t$-designs, for which Theorem~\ref{thm:new_thm_32_2} can be established.

Symmetry-restricted simplex $t$-designs may, in particular, offer a promising direction for future research. They enable the reconstruction of measures up to degree-$t$ moments, assuming that all relevant polynomial functions exhibit symmetry with respect to a given group—symmetry that may be intrinsic to the problem at hand—thereby reducing the number of evaluation points required. Whether a similar concept can be extended beyond the simplex setting to structures widely used in quantum information theory, such as complex projective designs and unitary designs, remains an open question.

	\section*{Acknowledgments}
	
I would like to thank Dardo Goyeneche and Victor Gonz{\'a}lez Avella for helpful comments during the final stage of preparation of this manuscript.

This work is supported by the start-up grant of the Nanyang Assistant Professorship at the Nanyang Technological University in Singapore, awarded to Nelly Ng.

\bibliographystyle{quantum_abbr}
\bibliography{references}
	
\end{document}